\DeclareMathOperator{\Ima}{Im}
\newtheorem{Def}{Definition}[section]
\newtheorem{Thm}[Def]{Theorem}
\newtheorem{Lem}[Def]{Lemma}
\newtheorem{Prop}[Def]{Proposition}
\newtheorem{Cor}[Def]{Corollary}
\newtheorem{Conj}[Def]{Conjecture}
\begin{document}

\title[On the modularity of elliptic curves]{On the modularity of elliptic curves over a composite field of some real quadratic fields}
\author{Sho Yoshikawa}
\address{Graduate School of Mathematical Sciences,
the University of Tokyo, 3-8-1, Komaba, Meguro-Ku, Tokyo 153-8914, Japan}
\email{yoshi@ms.u-tokyo.ac.jp}

\maketitle

\begin{abstract}
Let $K$ be a composite field of some real quadratic fields.
We give a sufficient condition on $K$ such that all elliptic curves over $K$ is modular.
\end{abstract}

\section{Introduction}
For an elliptic curve $E$ over a totally real field $K$, we say that $E$ is modular if there exists a Hilbert modular form $f$ over $K$ of parallel weight 2 which satisfies the equality
\[ L(E,s)=L(f,s). \]
The original Shimura-Taniyama conjecture asserts that any elliptic curve over $\mathbb{Q}$ is modular.
This conjecture was proved in the celebrated works by Wiles \cite{Wi}, Taylor-Wiles \cite{TW}, and Breuil-Conrad-Diamon-Taylor \cite{BCDT}.
The Shimura-Taniyama conjecture has a natural generalization to the totally real field setting:
\begin{Conj}
\label{ST}
Let $K$ be a totally real number field. Then, any elliptic curve over $K$ is modular.
\end{Conj}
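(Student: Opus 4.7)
The plan is to follow the standard modularity-lifting strategy of Wiles, Skinner--Wiles, Kisin, and Thorne, pushed to the totally real setting. Given an elliptic curve $E/K$, one considers its mod $p$ Galois representations $\bar\rho_{E,p}\colon G_K\to\mathrm{GL}_2(\mathbb{F}_p)$ at the small primes $p\in\{3,5,7\}$ and seeks at least one such $p$ for which (i) $\bar\rho_{E,p}$ is residually modular and (ii) a Kisin--Thorne-type lifting theorem applies to the $p$-adic representation $\rho_{E,p}$ above $\bar\rho_{E,p}$. Combining (i) and (ii) for a single $p$ produces a parallel weight-two Hilbert newform $f$ over $K$ with $L(E,s)=L(f,s)$.

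The cornerstone is residual modularity at $p=3$: when $\bar\rho_{E,3}$ has image containing $\mathrm{SL}_2(\mathbb{F}_3)$, the exceptional isomorphism $\mathrm{PGL}_2(\mathbb{F}_3)\cong S_4$ together with the Langlands--Tunnell theorem realises $\bar\rho_{E,3}$ as the reduction of an odd Artin representation attached to a weight-one Hilbert modular form, which a Deligne--Serre--Wiles congruence promotes to weight two. When the mod $3$ image is too small, one invokes the $3$--$5$--$7$ switching trick: find an auxiliary elliptic curve $E'/K$ with $\bar\rho_{E',q}\cong\bar\rho_{E,q}$ for some $q\in\{5,7\}$ but with $\bar\rho_{E',3}$ of large image, handle $E'$ by the previous case, and transfer modularity back to $E$ via the lifting theorem at $q$. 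Existence of $E'$ is a Hilbert-irreducibility statement on a suitable twist of the rational modular curve $Y(q)$, and is not the bottleneck. The lifting step (ii) must also accommodate ramification of $K$ at primes above $p$, typically by passing to a solvable totally real extension, applying cyclic base change for $\mathrm{GL}_2$, and descending.

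The main obstacle, and the reason the conjecture remains open in this generality, is the set of elliptic curves whose mod $p$ representation is simultaneously reducible, dihedral, or exceptional at all of $3$, $5$ and $7$. Such $E$ correspond to $K$-rational points on modular curves of small level, notably $X_0(15)$, $X_0(21)$ and $X_0(35)$, together with their twists and Atkin--Lehner quotients. Over $\mathbb{Q}$ and over real quadratic fields these non-cuspidal, non-CM point-sets are finite and can be enumerated by hand; but over a general totally real $K$ the Mordell--Weil ranks of the relevant Jacobians $J_0(N)$ may grow, and infinitely many sporadic exceptional $E$ can arise which escape the switching argument. Closing this last gap --- for instance via a residually reducible lifting theorem of Skinner--Wiles type adapted to the Hilbert modular setting, via a new source of residual modularity at one of the exceptional primes, or via a descent argument through a well-chosen cyclotomic subfield where the modular curve is under control --- is where the conjecture, in my view, actually lives, and is the step I would expect to be by far the hardest.
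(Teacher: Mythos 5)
The statement you were asked about is Conjecture~\ref{ST}, and the paper does not prove it --- it is stated as an open conjecture, with the paper's actual contribution (Theorem~\ref{main}) being a special case for certain multiquadratic fields $K$. Your submission is likewise not a proof: it is a strategy outline that, to your credit, explicitly concedes in its final paragraph that the key step remains open. So there is no proof on either side to compare; the honest verdict is that your text contains a genuine and admitted gap, namely the treatment of elliptic curves whose mod~$p$ representations are simultaneously small-image at $p=3,5,7$, which reduce to controlling $K$-rational points on modular curves such as $X_0(15)$ and $X_0(21)$ (and quotients of curves like $X(s3,b5)$, $X(s3,b7)$) over an arbitrary totally real field.

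That said, your diagnosis of where the difficulty lives is accurate and matches the architecture of the paper. The paper's Theorem~\ref{main} is exactly an instance of the ``descent argument through a well-chosen subfield where the modular curve is under control'' that you gesture at: under the hypothesis that every quadratic twist $X^{(s)}$ of $X=X_0(3p)$ has Mordell--Weil rank $0$ over $\mathbb{Q}$, Proposition~\ref{descent} forces $X(K)=X(\mathbb{Q})$ (via the eigenspace decomposition of Lemma~\ref{sum}, the surjectivity of $\bar\rho_{X,\ell}$ from Lemma~\ref{surj}, and an unramifiedness argument), so the problematic points descend to $\mathbb{Q}$ or to a real quadratic field, where modularity is known by \cite{BCDT} and \cite{FLS}. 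If you want to turn your outline into something provable, you would need to impose hypotheses of this kind on $K$; without them, the rank-growth obstruction you correctly identify is fatal, and the conjecture remains open.
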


A number of developments of modularity lifting theorem enable us to prove that many elliptic curves are modular.
However, it had been difficult to prove the modularity of \textit{all} elliptic curves over a fixed field.
A few years ago, Le Hung and Freitas-Le Hung-Siksek brought a breakthrough in the problem of modularity of elliptic curves:

\begin{Thm}
(\cite[Theorem 1]{FLS})
\label{quad}
Let $K$ be a real quadratic number field. Then, any elliptic curve over $K$ is modular.
\end{Thm}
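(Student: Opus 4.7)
The plan is to combine modularity lifting theorems at small primes with a detailed study of quadratic points on a short list of modular curves. First, I would invoke the strongest available modularity lifting results at the primes $p \in \{3, 5, 7\}$. At $p = 3$, Langlands-Tunnell together with the Kisin / Skinner-Wiles / Breuil-Diamond modularity lifting machinery handles any $E/K$ for which $\bar{\rho}_{E,3}$ has sufficiently large image (e.g.\ $\bar{\rho}_{E,3}|_{G_{K(\zeta_3)}}$ absolutely irreducible) and the local conditions at primes above $3$ are mild. At $p = 5$ and $p = 7$ modularity propagates by a ``$3$-$5$ switch'' and a ``$3$-$7$ switch'' in the style of Wiles: if $\bar{\rho}_{E,p}$ has large image one produces an auxiliary modular elliptic curve $E'/K$ with $\bar{\rho}_{E',p} \simeq \bar{\rho}_{E,p}$ and deduces modularity of $E$ via a residually-reducible (or residually-ordinary) lifting theorem.

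Second, this reduces the theorem to the elliptic curves $E/K$ whose mod-$p$ Galois representation fails to have ``large image'' for every $p \in \{3, 5, 7\}$ simultaneously. Such an $E$ yields a non-cuspidal $K$-point on one of a short list of modular curves parametrizing elliptic curves together with two independent exceptional mod-$p$ and mod-$q$ structures ($p \neq q$ in $\{3,5,7\}$); these are essentially $X_0(15)$, $X_0(21)$, and $X_0(35)$ and certain quadratic twists thereof encoding the Galois action on the reducible representations.

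Third, I would show that for every real quadratic $K$, every such $K$-point is either cuspidal or corresponds to an elliptic curve with complex multiplication; the CM case is handled by Shimura. Each of the relevant modular curves has genus at least $2$, so for a fixed $K$ finiteness of $K$-points follows from Faltings, but the real content is uniform control over \emph{all} real quadratic $K$. The technical engine is Chabauty-Coleman combined with a Mordell-Weil sieve applied either to symmetric squares of the modular curves or, equivalently, to the Atkin-Lehner quotients through which quadratic points on the cover typically arise from $\mathbb{Q}$-rational points.

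The main obstacle is precisely this last step: bounding the non-cuspidal, non-CM quadratic points on $X_0(21)$ and $X_0(35)$ uniformly over all real quadratic fields. This Diophantine analysis, rather than the Galois-theoretic reduction, is the decisive technical contribution of Freitas-Le Hung-Siksek, and I would expect the bulk of the work in a full proof to concentrate on explicit computations with these two modular curves and their Jacobians.
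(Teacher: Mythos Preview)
The present paper does not prove this statement at all: it is quoted verbatim from \cite[Theorem~1]{FLS} and used only as an input. There is therefore no ``paper's own proof'' to compare your outline against.

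As a sketch of the argument in \cite{FLS} your plan is in the right spirit, but the Diophantine core is misidentified. After the modularity‐lifting step one is left with $E/K$ such that $\bar\rho_{E,p}|_{G_{K(\zeta_p)}}$ is absolutely reducible for \emph{every} $p\in\{3,5,7\}$; this does not force each $\bar\rho_{E,p}$ to be Borel---it may land in the normaliser of a split or non-split Cartan. Freitas--Le~Hung--Siksek then pair the exceptional structures at $5$ and $7$ (not at $3$ with $5$ or $7$) and are led to a list of seven modular curves such as $X(\mathrm{b}5,\mathrm{b}7)$, $X(\mathrm{b}5,\mathrm{ns}7)$, $X(\mathrm{s}3,\mathrm{b}5,\mathrm{d}7)$, etc., and it is on \emph{these} that the symmetric-square Chabauty and Mordell--Weil sieve are run. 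Your list $X_0(15)$, $X_0(21)$, $X_0(35)$ captures only the all-Borel case; moreover $X_0(15)$ and $X_0(21)$ are elliptic curves of genus~$1$ (the present paper exploits exactly this fact), so your claim that ``each of the relevant modular curves has genus at least~$2$'' is false, and neither Faltings nor Chabauty applies to them directly. The hard uniform Diophantine input in \cite{FLS} lives on the higher-genus curves arising from the Cartan level structures at $5$ and $7$, not on $X_0(21)$ or $X_0(35)$ as you suggest.
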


Also, using the results in \cite{Th} and in Iwasawa-theory for elliptic curves, Thorne recently proved the following theorem:

\begin{Thm}
(\cite[Theorem 1]{Th2})
Let $p$ be a prime number and $K$ be a totally real field contained in a $\mathbb{Z}_p$-cyclotomic extension of $\mathbb{Q}$.
Then, any elliptic curve over $K$ is modular.
\end{Thm}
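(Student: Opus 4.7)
The plan is to follow the three-step strategy that has become standard for proving modularity of all elliptic curves over a small totally real field: first apply modularity lifting to the residual representations $\bar\rho_{E,\ell}$ of $E/K$ for a small set of primes $\ell$; second, reduce the exceptional configurations to $K$-rational points on a short list of modular curves; and third, use Iwasawa theory to control those rational points. For the first step, I would try to prove that $\bar\rho_{E,\ell} \colon G_K \to \mathrm{GL}_2(\mathbb F_\ell)$ is modular for some $\ell\in\{3,5,7\}$, using the Langlands--Tunnell theorem at $\ell=3$ (which handles every $\bar\rho_{E,3}$ whose projective image is solvable) and the exceptional isomorphism $\mathrm{PSL}_2(\mathbb F_5)\simeq A_5$ at $\ell=5$, both applied after a suitable soluble base change. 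The modularity lifting theorems of \cite{Th}, which importantly allow residually reducible or otherwise small residual image, then promote such residual modularity to modularity of $E$ itself.

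For the second step, the remaining cases are those in which $\bar\rho_{E,\ell}$ has projective image too small for every $\ell\in\{3,5,7\}$ and simultaneously the local hypotheses of the lifting theorems cannot be verified. A case analysis forces $E$ to possess a very restricted combination of $K$-rational cyclic isogenies, so that $E$ gives rise to a non-cuspidal $K$-point on one of the modular curves $X_0(15)$, $X_0(21)$, or $X_0(35)$. Each of these is an elliptic curve of Mordell--Weil rank zero over $\mathbb Q$, and each non-cuspidal $\overline{\mathbb Q}$-point of finite order corresponds either to an elliptic curve with complex multiplication (hence modular) or is otherwise handled by the first step. The task therefore reduces to enumerating $X_0(N)(K)$ for $N\in\{15,21,35\}$.

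The third step is where the Iwasawa-theoretic hypothesis on $K$ is essential. By Kato's divisibility in the cyclotomic Iwasawa main conjecture, an elliptic curve $A/\mathbb Q$ with $L(A,1)\ne 0$ has finite Mordell--Weil group over every finite layer of the cyclotomic $\mathbb Z_p$-extension of $\mathbb Q$, with an explicit bound on the growth of torsion in the tower. Applied to $X_0(15)$, $X_0(21)$, and $X_0(35)$, whose $L$-values at $s=1$ are nonzero, this yields the required finite enumeration of $X_0(N)(K)$ and completes the argument. I expect the principal obstacle to be precisely this last step: one must verify Kato's nonvanishing hypothesis for each relevant prime $p$, carefully control how the torsion of $X_0(N)$ can grow in the pro-$p$ tower, and separately handle the delicate coincidence $\ell=p$ in the modularity lifting input, where the local conditions at primes of $K$ above $p$ become most subtle.
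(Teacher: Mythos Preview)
The paper does not prove this statement at all: it is quoted verbatim as \cite[Theorem 1]{Th2} and used only as context for the main result. There is therefore no proof in the present paper to compare your sketch against.

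That said, your outline is a reasonable summary of the strategy Thorne actually follows in \cite{Th2}: use the modularity lifting package of \cite{FLS} and the residually-dihedral results of \cite{Th} to reduce to non-cuspidal $K$-points on a short list of genus-one modular curves, and then invoke Kato's Euler-system divisibility to show that these curves, having rank $0$ and nonvanishing $L(1)$ over $\mathbb{Q}$, acquire no new points in the cyclotomic $\mathbb{Z}_p$-tower. Two small inaccuracies: the isomorphism $\mathrm{PSL}_2(\mathbb{F}_5)\simeq A_5$ is not a source of residual modularity (the group is not solvable); what is actually used at $\ell=5$ is the theorem of \cite{Th} on residually dihedral representations together with the modularity-switching input encoded in \cite[Theorem~2]{FLS}. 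And the curves that arise are not literally $X_0(15)$, $X_0(21)$, $X_0(35)$ but curves such as $X(\mathrm{b}3,\mathrm{b}5)$ and $X(\mathrm{s}3,\mathrm{b}5)$, which are isogenous or admit low-degree maps to the $X_0(N)$ (cf.\ Lemma~\ref{s3b5} of the present paper). These are details; the architecture you describe, and in particular your identification of Kato's theorem as the decisive Iwasawa-theoretic input, matches Thorne's argument.
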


In this paper, we prove some special cases of Conjecture \ref{ST} by making use of methods in \cite{FLS}.
Before stating our main theorem, we introduce a notation and give a remark:
For an elliptic curve $X$ over a field $F$, a Galois extension $K/F$, and a character $s: \mathrm{Gal}(K/F)\rightarrow \{ \pm 1\}$,  we write $X^{(s)}$ for the quadratic twist of $X$ by $\tilde{s}: \mathrm{Gal}(\bar{F}/F)\rightarrow \mathrm{Gal}(K/F)\stackrel{s}{\longrightarrow} \{ \pm 1\}$. 
Also, we note that the modular curve $X_0(15)$ (resp. $X_0(21)$) is an elliptic curve of rank 0 with Cremona label 15A1 (resp. 21A1);
for example, see \cite{FLS} (Magma scripts are available at http://arxiv.org/abs/1310.7088).
The main theorem of this paper is then the following:
\begin{Thm}
\label{main}
Let $p=5$ or $7$, and $X$ be the modular curve $X_0(3p)$.
Let $K$ be a composite field of finite number of real quadratic fields.
We assume that $K$ is unramified at every prime dividing $6p$.
We furthermore assume that, for each character $s: \mathrm{Gal}(K/\mathbb{Q})\rightarrow \{ \pm 1\}$ , the Mordell-Weil group $X^{(s)}(\mathbb{Q})$ is of rank 0.
Then, any elliptic curve over $K$ is modular.
\end{Thm}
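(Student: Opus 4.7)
The plan is to adapt the strategy of Freitas--Le Hung--Siksek underlying Theorem \ref{quad} from the real quadratic setting to the composite setting. Let $E$ be an elliptic curve over $K$. We apply modularity lifting theorems at the primes $3$ and $p$ to reduce the problem to a statement about the $K$-rational points of $X = X_0(3p)$.

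\textbf{Step 1: Modularity lifting and a $3$-$p$ switch.}
If $\bar\rho_{E,3}$ is absolutely irreducible with sufficiently big image, then the Langlands--Tunnell theorem together with a modularity lifting theorem for totally real fields (of the type proven in \cite{Th}) yields the modularity of $E$ directly. If instead $\bar\rho_{E,3}$ is reducible but $\bar\rho_{E,p}$ is absolutely irreducible, a standard $3$-$p$ switch produces an auxiliary totally real modular elliptic curve with the same mod-$p$ representation as $E$; modularity lifting at $p$ then implies the modularity of $E$. Throughout, the hypothesis that $K$ is unramified at primes dividing $6p$ ensures the requisite crystalline/flatness hypotheses. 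The remaining case is that both $\bar\rho_{E,3}$ and $\bar\rho_{E,p}$ are reducible, meaning that $E$ admits $K$-rational isogenies of degrees $3$ and $p$, and hence gives a non-cuspidal $K$-point on $X = X_0(3p)$.

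\textbf{Step 2: Bounding $X(K)$ and the finite check.}
Since $K$ is a composite of real quadratic fields, $G := \mathrm{Gal}(K/\mathbb{Q})$ is an elementary abelian $2$-group and all of its irreducible $\mathbb{Q}$-characters are one-dimensional. The standard decomposition of the Mordell--Weil group under such an extension yields, up to finite index,
\[
X(K) \otimes \mathbb{Q} \;\cong\; \bigoplus_{s : G \to \{\pm 1\}} X^{(s)}(\mathbb{Q}) \otimes \mathbb{Q}.
\]
The hypothesis forces each summand to vanish, so $X(K)$ is finite. It then suffices to check that every non-cuspidal $K$-point of $X$ corresponds to an elliptic curve already known to be modular: either a quadratic twist of a modular elliptic curve over $\mathbb{Q}$ (arising from a point already rational on $X_0(3p)$ or on one of its twists) or a CM elliptic curve.

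\textbf{Main obstacle.}
The heart of the proof is this final finite check: finiteness of $X(K)$ alone does not rule out ``exotic'' $K$-torsion points on $X_0(3p)$ giving rise to non-CM elliptic curves with a new $j$-invariant. Overcoming this requires a careful analysis of the torsion of $X_0(3p)$ over composite real quadratic fields, combined with the known rational points of $X_0(3p)/\mathbb{Q}$ and of the twists $X^{(s)}$; here the unramifiedness of $K$ at primes dividing $6p$ is expected to play a crucial role through reduction arguments. A secondary technical obstacle is verifying that the hypotheses (largeness of image, absolute irreducibility on $\mathrm{Gal}(\bar K/K(\zeta_p))$, flat/crystalline deformation conditions) required by the modularity lifting theorems continue to hold when $K$ is a composite rather than a single real quadratic field.
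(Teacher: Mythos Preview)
Your outline has a genuine gap in the case analysis of Step~1. The modularity lifting input (Theorem~\ref{mlt}) requires absolute irreducibility of $\bar\rho_{E,3}|_{G_{K(\zeta_3)}}$, not merely of $\bar\rho_{E,3}$. When this restriction is reducible, $\bar\rho_{E,3}$ itself need not be reducible: its image may lie in the normaliser of a split Cartan subgroup of $GL_2(\mathbb{F}_3)$. In that situation $E$ does \emph{not} give a point on $X_0(3p)$ but rather on the modular curve $X(s3,bp)$ (split Cartan level at $3$, Borel level at $p$), and your Step~2 says nothing about the $K$-points of this second curve. The paper handles this by exhibiting a degree-$2$ morphism $f_p:X(s3,bp)\to X_0(3p)$ (Lemma~\ref{s3b5}) and then arguing that, once one knows $X_0(3p)(K)=X_0(3p)(\mathbb{Q})$, the fibre $f_p^{-1}(f_p(P))$ carries a $\mathrm{Gal}(K/\mathbb{Q})$-action with kernel of index at most $2$, forcing $P$ to be $\mathbb{Q}$-rational or real-quadratic.

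For Step~2 you correctly isolate the obstacle, but the paper's resolution is sharper than a ``finite check'': it proves $X(K)=X(\mathbb{Q})$ outright (Proposition~\ref{descent}). The mechanism is as follows. By Serre's criterion one shows $\bar\rho_{X,\ell}$ is surjective for every prime $\ell\geq 3$, so each $X^{(s)}(\mathbb{Q})\cong X(K)_s$ consists only of $2$-power torsion. The elementary inclusion $2^r X(K)\subset\sum_s X(K)_s$ then forces $X(K)\subset X[2^N](\bar{\mathbb{Q}})$ for some $N$. Now the $G_{\mathbb{Q}}$-module $X[2^N]$ is unramified outside the primes dividing $2\cdot 3p=6p$, whereas $K/\mathbb{Q}$ is assumed unramified at exactly those primes; hence the $\mathrm{Gal}(K/\mathbb{Q})$-action on $X(K)$ is trivial. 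This is where the unramifiedness hypothesis enters, and it eliminates the need for any case-by-case torsion analysis.
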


In contrast to the case of cyclic field extensions as considered in \cite{Th}, we consider here certain extensions which are far from cyclic ones.
We prove Theorem \ref{main} in the next section. It is motivated from the proof of \cite[Lemma 1.1]{FLS} and is outlined as follows:
By \cite[Theorem 2]{FLS}, an elliptic curve which is not yet proved to be modular defines a point of a certain modular curve.
We show that the point of the modular curve is actually a rational point or a real quadratic point, both of which corresponds to a modular elliptic curve.

After the proof of Theorem \ref{main}, we also discuss how often the hypothesis of Theorem \ref{main} is likely to hold.

\section{Proof of Theorem \ref{main}}

For a group $G$, a $\mathbb{Z}[G]$-module $M$, and a character $s: G\rightarrow \{ \pm 1\}$,
we write $M_s$ for the subgroup of $M$ defined by
\[ M_s=\{ m\in M; m^\sigma =s_\sigma m \mathrm{\ for\ all\ } \sigma \in G\}. \]
The following lemma immediately follows from the definition of quadratic twists.
 
\begin{Lem}
\label{tMW}
Let $K/F$ be a Galois extension.
Let $X$ be an elliptic curve over $F$.
Then, for each character $s: \mathrm{Gal}(K/F)\rightarrow \{ \pm 1\}$, we have
\[ X(K)_s\simeq X^{(s)}(F). \]
\end{Lem}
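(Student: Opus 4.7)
The plan is to unwind the definitions and exhibit the isomorphism explicitly via the twisting map. Recall that the quadratic twist $X^{(s)}$ comes equipped with a $K$-isomorphism $\phi \colon X_K \xrightarrow{\sim} X^{(s)}_K$ whose Galois cocycle is governed by $s$: for every $\sigma \in \mathrm{Gal}(K/F)$ one has $\phi^{\sigma} = s_\sigma \cdot \phi$, where we use additively that $s_\sigma = \pm 1$ commutes with the group law. I would take this as the starting point and note that $\phi$ induces, on $K$-points, a group isomorphism $\Phi \colon X(K) \xrightarrow{\sim} X^{(s)}(K)$, $P \mapsto \phi(P)$.

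Next I would simply check which points of $X(K)$ get sent to $F$-rational points of $X^{(s)}$. For $P \in X(K)$ and $\sigma \in \mathrm{Gal}(K/F)$, the cocycle relation yields
\[ \Phi(P)^\sigma \;=\; \phi^\sigma(P^\sigma) \;=\; s_\sigma \cdot \phi(P^\sigma) \;=\; \phi(s_\sigma P^\sigma). \]
Since $\phi$ is a bijection, the condition $\Phi(P)^\sigma = \Phi(P)$ for all $\sigma$ is equivalent to $P^\sigma = s_\sigma P$ for all $\sigma$, i.e.\ to $P \in X(K)_s$. Thus $\Phi$ restricts to a group isomorphism $X(K)_s \xrightarrow{\sim} X^{(s)}(F)$, which is the desired statement.

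There is essentially no obstacle: the whole content is the cocycle defining the twist. The only thing one has to be slightly careful about is signing conventions, namely that the identification of $\tilde s$ with $s$ via the quotient $\mathrm{Gal}(\bar F/F) \twoheadrightarrow \mathrm{Gal}(K/F)$ is compatible with the action of $\mathrm{Gal}(\bar F /F)$ on $X(K) \subseteq X(\bar F)$, so that the condition $P^\sigma = s_\sigma P$ (for $\sigma \in \mathrm{Gal}(K/F)$) used in the definition of $X(K)_s$ coincides with $P^\tau = \tilde s_\tau P$ (for $\tau \in \mathrm{Gal}(\bar F/F)$). This is immediate since $X(K)_s$ is by definition a subgroup of $X(K)$ and both conditions factor through $\mathrm{Gal}(K/F)$. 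This justifies the author's remark that the lemma follows directly from the definition.
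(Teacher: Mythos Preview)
Your argument is correct and follows exactly the same route as the paper: you use the $K$-isomorphism $\phi$ with $\phi^{\sigma}=s_\sigma\phi$ (equivalently $\phi(P^\sigma)=s_\sigma\,\phi(P)^\sigma$) and check that under this map the condition $P^\sigma=s_\sigma P$ corresponds to $\Phi(P)$ being $F$-rational. This is precisely the paper's proof, with a bit more detail spelled out.
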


\begin{proof}
By the definition of quadratic twists, 
we have an isomorphism $f: X\stackrel{\simeq}{\longrightarrow} X^{(s)}$ over $K$ which satisfies $f(P^\sigma)=s_\sigma f(P)^\sigma$ for $P\in X(K)$ and $\sigma\in G=\mathrm{Gal}(K/F)$.
By the isomorphism $f$, the subgroup $X(K)_s\subset X(K)$ corresponds to the subgroup $X^{(s)}(F)\subset X^{(s)}(K)$.
\end{proof}

For a group $G$ isomorhic to $(\mathbb{Z}/(2))^r$ with $r$ a positive integer, let $G^{\vee}$ denotes here the group $\mathrm{Hom}(G, \{ \pm 1\})$ of characters.

\begin{Lem}
\label{sum}
Let $G$ is a group isomorphic to $(\mathbb{Z}/(2))^r$ for a positive integer $r$ and $M$ is a $\mathbb{Z}[G]$-module, then we have $2^r M\subset \sum_{s\in G^{\vee}} M_s$.
\end{Lem}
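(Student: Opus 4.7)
The plan is to use the classical orthogonal idempotent construction coming from characters, adapted here to the fact that characters of $G\cong(\mathbb{Z}/(2))^r$ take values in $\{\pm 1\}\subset\mathbb{Z}$, which lets the whole argument be carried out inside $\mathbb{Z}[G]$ without inverting $2^r$.

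For each character $s\in G^{\vee}$ I would introduce the element
\[
 e_s \;=\; \sum_{\sigma\in G} s_\sigma\,\sigma \;\in\;\mathbb{Z}[G],
\]
and first check that $e_s m\in M_s$ for every $m\in M$. This is a direct computation: for any $\tau\in G$,
\[
 \tau\cdot e_s m \;=\; \sum_{\sigma\in G} s_\sigma\,(\tau\sigma)\,m \;=\; \sum_{\sigma'\in G} s_{\tau^{-1}\sigma'}\,\sigma' m \;=\; s_{\tau^{-1}}\,e_s m \;=\; s_\tau\,e_s m,
\]
using that $s$ is a homomorphism and that $s_{\tau^{-1}}=s_\tau$ because $s_\tau\in\{\pm 1\}$.

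Next I would sum $e_s m$ over all $s\in G^{\vee}$ and swap the order of summation:
\[
 \sum_{s\in G^{\vee}} e_s m \;=\; \sum_{\sigma\in G}\Bigl(\sum_{s\in G^{\vee}} s_\sigma\Bigr)\sigma\,m.
\]
Orthogonality of characters of the finite abelian group $G$ gives $\sum_{s\in G^{\vee}} s_\sigma = |G^{\vee}|=2^r$ when $\sigma$ is the identity and $0$ otherwise, so the right-hand side collapses to $2^r m$. Therefore
\[
 2^r m \;=\; \sum_{s\in G^{\vee}} e_s m \;\in\; \sum_{s\in G^{\vee}} M_s,
\]
proving the inclusion $2^r M\subset \sum_{s\in G^{\vee}}M_s$.

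There is essentially no obstacle here: the only thing worth verifying carefully is the sign identity $s_{\tau^{-1}}=s_\tau$, which is exactly what makes the naive $\mathbb{Z}[G]$-idempotent $e_s$ land in the $s$-isotypic part on the nose, and the character-orthogonality sum, which is valid over $\mathbb{Z}$ since $|G^{\vee}|$ is already an integer. No denominators are needed, which is precisely why the conclusion is $2^r M\subset\sum_s M_s$ rather than an equality after inverting $2$.
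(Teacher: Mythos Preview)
Your proof is correct and is essentially identical to the paper's argument: both use the element $e_s m=\sum_{\sigma\in G} s_\sigma\, m^\sigma\in M_s$ and the identity $\sum_{s\in G^\vee} e_s m = 2^r m$. You spell out the verification that $e_s m\in M_s$ and the orthogonality computation more explicitly, but there is no difference in approach.
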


\begin{proof}
This is clear by noting that, for $m\in M$, $2^rm$ is written as
\[2^rm= \sum_{s\in G^{\vee}} \sum_{\sigma \in G} s_\sigma m^\sigma,\]
and that the element $\sum_{\sigma \in G} s_\sigma m^\sigma$ belongs to $M_s$.
\end{proof}

For an elliptic curve $E$ over a field $K$ and a prime number $\ell$, let $\bar{\rho}_{E,\ell}: \mathrm{Gal}(\bar{K}/K)\rightarrow \mathit{GL}_2(\mathbb{F}_\ell)$ denote the mod $\ell$ Galois representation attached to the $\ell$-torsion points of $E$.

\begin{Lem}
\label{surj}
For $X=X_0(15)$ or $X_0(21)$ and a prime number $\ell\geq 3$, the mod $\ell$ Galois representation $\bar{\rho}_{X,\ell}$ is surjective. 
\end{Lem}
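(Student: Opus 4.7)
The plan is to invoke Dickson's classification and eliminate each proper possibility for the image of $\bar\rho_{X,\ell}$ in $GL_2(\mathbb{F}_\ell)$. The three families of maximal subgroups with surjective determinant to rule out are: Borel subgroups, normalizers of split or non-split Cartan subgroups, and subgroups whose image in $PGL_2(\mathbb{F}_\ell)$ is exceptional (isomorphic to $A_4$, $S_4$, or $A_5$).

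The first key input is that $X_0(15)$ and $X_0(21)$ have squarefree conductors $15$ and $21$, hence multiplicative reduction at every bad prime. Since the bad-prime sets $\{3,5\}$ and $\{3,7\}$ each contain a prime distinct from any given $\ell \geq 3$, the Tate uniformization shows that $\Ima \bar\rho_{X,\ell}$ contains a unipotent element of exact order $\ell$. This immediately excludes the Cartan-normalizer cases, whose orders $2(\ell-1)^2$ and $2(\ell^2-1)$ are coprime to $\ell$, and for $\ell \geq 7$ it also excludes the exceptional case, since the relevant projective orders $12$, $24$, $60$ are coprime to $\ell$.

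A Borel image corresponds to a rational $\ell$-isogeny on $X$. By Mazur's theorem the prime degrees of rational isogenies on an elliptic curve over $\mathbb{Q}$ lie in the finite set $\{2,3,5,7,11,13,17,19,37,43,67,163\}$, and inspection of the isogeny classes 15A and 21A in Cremona's tables shows that the only rational isogenies on $X$ have $2$-power degree; hence the Borel case does not occur for $\ell \geq 3$.

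The main obstacle is the exceptional case for the residual small primes $\ell \in \{3,5\}$, where the order argument has no force. I would dispose of this by a direct computation of Frobenius traces $a_q \bmod \ell$ at a handful of primes $q$ of good reduction, exhibiting an element of the image whose projective order exceeds $5$ and thereby ruling out any embedding into $A_4$, $S_4$, or $A_5$. Since $X$ is one of only two specific elliptic curves of very small conductor, this verification is entirely routine (and is in fact implicit in the Magma scripts referenced earlier).
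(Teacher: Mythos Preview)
The paper takes a shorter route: it simply verifies the two numerical hypotheses of \cite[Proposition~21]{Se}---that $\ell$ does not divide the valuations $v_p(j_X)$ at the bad primes (these are $-4,-4$ for $X_0(15)$ and $-4,-2$ for $X_0(21)$), and that $\ell\nmid |X(\mathbb{F}_q)|=8$ at one auxiliary good prime $q$---and then invokes Serre's criterion as a black box, with no case analysis at all.

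Your Dickson-based approach is reasonable in outline, but the treatment of the exceptional case at $\ell=3$ cannot work as written: you propose to exhibit a Frobenius of projective order exceeding $5$, yet $PGL_2(\mathbb{F}_3)\cong S_4$ has no element of order larger than $4$, so no such element exists. In fact, since the full projective group already \emph{is} $S_4$, there is nothing ``exceptional'' to rule out at $\ell=3$; similarly $A_5\cong PSL_2(\mathbb{F}_5)$ is not a proper exceptional subgroup at $\ell=5$. The repair is that the exceptional analysis is unnecessary altogether. Once the image contains an element of exact order $\ell$ (your Tate argument---note this needs $\ell\nmid v_p(j_X)$, a condition you omitted but which does hold here) and is not contained in a Borel, it fails to normalize its Sylow $\ell$-subgroup, hence contains two distinct such, and two distinct unipotent subgroups generate $SL_2(\mathbb{F}_\ell)$. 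Surjectivity of the determinant then yields all of $\mathit{GL}_2(\mathbb{F}_\ell)$, uniformly for every $\ell\geq 3$, with no separate small-prime discussion required.
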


\begin{proof}
Recall that $X_0(15)$ (resp. $X_0(21)$) is the elliptic curve with Cremona label 15A1 (resp. 21A1).
The $j$-invariant $j_X$ of $X$ is given by
\[
j_X = 
\begin{cases}
     3^{-4}\cdot 5^{-4}\cdot  13^3\cdot 37^3 & (\mathrm{if}\, X=X_0(15)) \\
     3^{-4}\cdot 7^{-2}\cdot 193^3 & (\mathrm{if}\, X=X_0(21)).
 \end{cases}
\]
Thus, $\ell$ does not divide the exponents of $3$ and $5$ (resp. $3$ and $7$) in $j_{X_0(15)}$ (resp. $j_{X_0(21)}$). 
Also, by hand or by looking at the coefficients of the modular form corresponding to $X$, it is checked that $|X_0(15)(\mathbb{F}_7)|=|X_0(21)(\mathbb{F}_5)|=8$; in particular, these are not divisible by $\ell$.
Applying \cite[Proposition 21]{Se} to our $X$ and $\ell$, we see that $\bar{\rho}_{X,\ell}$ is surjective.
\end{proof}

Using the above two basic results, we prove the following proposition.

\begin{Prop}
\label{descent}
Under the assumption of Theorem \ref{main},
we have $X(K)=X(\mathbb{Q})$.
\end{Prop}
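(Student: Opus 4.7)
The plan is to show first that $X(K)$ has rank $0$ (so is finite) via Lemmas \ref{tMW} and \ref{sum}, and then to pin down its torsion by treating odd primes via Lemma \ref{surj} and the prime $2$ via a ramification argument powered by Minkowski. Since $K$ is a composite of real quadratic fields, $G := \mathrm{Gal}(K/\mathbb{Q})$ is elementary abelian of exponent $2$, so isomorphic to $(\mathbb{Z}/(2))^r$ for some $r \geq 0$ (the case $r = 0$ being trivial). By Lemma \ref{tMW}, each $X(K)_s$ is isomorphic to $X^{(s)}(\mathbb{Q})$, which is finite by the rank-$0$ hypothesis. Lemma \ref{sum} then gives $2^r X(K) \subseteq \sum_{s \in G^{\vee}} X(K)_s$, a finite subgroup of $X(K)$; combined with Mordell-Weil this forces $X(K)$ to have rank $0$, so $X(K) = X(K)_{\mathrm{tors}}$.

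For each prime $\ell \geq 3$, Lemma \ref{surj} identifies $\mathrm{Gal}(\mathbb{Q}(X[\ell])/\mathbb{Q})$ with $\mathit{GL}_2(\mathbb{F}_\ell)$. The image $H$ of $\mathrm{Gal}(\bar{\mathbb{Q}}/K)$ in $\mathit{GL}_2(\mathbb{F}_\ell)$ is a normal subgroup whose quotient is $\mathrm{Gal}(K \cap \mathbb{Q}(X[\ell])/\mathbb{Q})$, a subquotient of $G$ and hence abelian. Therefore $H$ contains the commutator subgroup $[\mathit{GL}_2(\mathbb{F}_\ell), \mathit{GL}_2(\mathbb{F}_\ell)]$, which equals $\mathit{SL}_2(\mathbb{F}_\ell)$ for every $\ell \geq 3$. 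Because $\mathit{SL}_2(\mathbb{F}_\ell)$ acts transitively on $\mathbb{F}_\ell^2 \setminus \{0\}$, it has no nonzero fixed vector, and so neither does $H$; this gives $X(K)[\ell] = 0$ and hence $X(K)[\ell^\infty] = 0$.

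For the $2$-primary part, $\mathbb{Q}(X[2^n])/\mathbb{Q}$ is unramified outside $\{2, 3, p\}$, which is exactly the set of primes dividing $6p$. The hypothesis of Theorem \ref{main} then makes $K \cap \mathbb{Q}(X[2^n])$ unramified at every finite prime and, being a subfield of the totally real field $K$, unramified at infinity, so Minkowski's discriminant bound forces it to equal $\mathbb{Q}$. Hence $\mathrm{Gal}(\bar{\mathbb{Q}}/K)$ surjects onto $\mathrm{Gal}(\mathbb{Q}(X[2^n])/\mathbb{Q})$, and $X(K)[2^n] = X(\mathbb{Q})[2^n]$ for every $n$. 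Combining with the rank-$0$ information for $X(\mathbb{Q})$ itself (the $s = 1$ case of the hypothesis), $X(K) = X(K)_{\mathrm{tors}} = X(\mathbb{Q})_{\mathrm{tors}} = X(\mathbb{Q})$. The step requiring the most care is the odd-torsion one: the Minkowski argument used for $\ell = 2$ does not cover primes $\ell \notin \{2, 3, p\}$ on its own, since $K$ is only assumed unramified at primes dividing $6p$; one really needs the large image input from Lemma \ref{surj} together with $\mathit{SL}_2(\mathbb{F}_\ell)$'s transitive action on nonzero vectors to close the gap.
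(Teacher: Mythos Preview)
Your proof is correct, and it uses the same ingredients as the paper (Lemmas \ref{tMW}, \ref{sum}, \ref{surj}, and a Minkowski-type ramification argument), but you organize them differently. The paper first uses Lemma \ref{surj} only to note that $\bar{\rho}_{X,\ell}$ is irreducible for $\ell\ge 3$, hence each $X^{(s)}(\mathbb{Q})\simeq X(K)_s$ already has only $2$-power torsion; together with the rank-$0$ hypothesis and Lemma \ref{sum} this gives $[2^{n+r}]X(K)=0$ in one stroke, so $X(K)\subset X[2^{n+r}](\bar{\mathbb{Q}})$ and a single ramification argument finishes. You instead first extract rank $0$, then treat odd $\ell$ separately by the stronger fact that the image of $G_K$ contains $[\mathit{GL}_2(\mathbb{F}_\ell),\mathit{GL}_2(\mathbb{F}_\ell)]=\mathit{SL}_2(\mathbb{F}_\ell)$ (valid for $\ell\ge 3$), and only then run the Minkowski argument for $\ell=2$. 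The paper's route is a bit more economical---it never needs the commutator computation or transitivity of $\mathit{SL}_2$ on nonzero vectors---while your route makes the role of the ``large image'' input more explicit and would adapt more readily to base fields $K$ whose Galois group is abelian but not $2$-elementary. Your closing remark about why the Minkowski step alone does not handle odd $\ell$ is well taken, but note that the paper sidesteps this issue entirely by reducing to $2$-power torsion before ever invoking ramification.
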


\begin{proof}
Let $G$ denote the Galois group $\mathrm{Gal}(K/\mathbb{Q})$, which is by assumption isomorphic to $(\mathbb{Z}/(2))^r$ for a positive integer $r$.
By Lemma \ref{surj}, $\bar{\rho}_{X,\ell}$ for every prime $\ell\geq 3$ is in particular irreducible, and thus $X^{(s)}(\mathbb{Q})$ for $s\in G^{\vee}$ has only 2-power torsion points.
Also, for each $s\in G^{\vee}$, $X^{(s)}(\mathbb{Q})$ is assumed to be of rank 0, and Lemma \ref{tMW} implies that $X(K)_s\simeq X^{(s)}(\mathbb{Q})$.
It follows that all $X(K)_s$ for $s\in G^{\vee}$ are killed by $[2^n]: X\rightarrow X$ for some positive integer $n$.
Then, since $[2^r] X(K)\subset \sum_{s\in G^{\vee}}X(K)_s$ by Lemma \ref{sum},
we have $[2^{n+r}] X(K)=0$; that is, $X(K)\subset X[2^{n+r}](\bar{\mathbb{Q}})$.
This implies that $G$-module $X(K)$ can be ramified only at primes dividing $6p$.
On the other hand, by the assumption that $K$ is unramified at primes dividing $6p$, it follows that $X(K)$ is unramified everywhere.
Therefore, we have $X(K)=X(\mathbb{Q})$. 
\end{proof}

We note the following modularity theorem, which is a consequence of a general modularity lifting theorem, the theorem of Langlands-Tunnel, and the modularity switching arguments. 

\begin{Thm}
(\cite[Theorem 2]{FLS})
\label{mlt}
Let $E$ be an elliptic curve over a totally real field $K$.
If $p=3,5,$ or $7$, and if the mod $p$ Galois representation $\bar{\rho}_{E,p}|_{G_{K(\zeta_p)}}$ is absolutely irreducible,
then $E$ is modular.
\end{Thm}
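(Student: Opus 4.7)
The plan is to reduce to the known case $p=3$ via an auxiliary-prime switch, in the style of Wiles, adapted to the totally real setting. First I would establish the $p=3$ case: by a residually irreducible modularity lifting theorem over totally real fields (of the type proved by Skinner--Wiles and refined by Kisin, Gee, and Geraghty), modularity of $E$ follows once one knows that $\bar{\rho}_{E,3}$ is itself modular, given the standing hypothesis that $\bar{\rho}_{E,3}|_{G_{K(\zeta_3)}}$ is absolutely irreducible. To prove residual modularity, I would exploit the fact that $GL_2(\mathbb{F}_3)$ is solvable: via the embedding $GL_2(\mathbb{F}_3)\hookrightarrow GL_2(\mathbb{Z}[\sqrt{-2}])\subset GL_2(\mathbb{C})$, one lifts $\bar{\rho}_{E,3}$ to a continuous odd complex representation of $G_K$ with solvable image, to which the theorem of Langlands--Tunnel applies. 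This yields automorphy of the lift, hence modularity of $\bar{\rho}_{E,3}$, closing the case $p=3$.

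For $p=5$ and $p=7$, the image of $\bar{\rho}_{E,p}$ is no longer solvable, so I would perform a ``$p$-to-$3$'' switch. The key tool is the twisted modular curve $X_E(p)$ parameterizing pairs $(E',\phi)$ with $\phi$ a $G_K$-equivariant symplectic isomorphism $E'[p]\simeq E[p]$; since $E$ itself gives a $K$-rational point and $X(p)$ has genus zero for $p=5,7$, we have $X_E(p)\simeq \mathbb{P}^1_K$, so its $K$-rational points are Zariski dense. A Hilbert irreducibility argument over $K$ then produces a $K$-rational point outside the thin subset on which the mod-$3$ representation degenerates, yielding an elliptic curve $E'/K$ with $\bar{\rho}_{E',p}\simeq \bar{\rho}_{E,p}$ and $\bar{\rho}_{E',3}|_{G_{K(\zeta_3)}}$ absolutely irreducible. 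The $p=3$ case applied to $E'$ shows $E'$ is modular; transporting automorphy across the symplectic isomorphism on $p$-torsion gives modularity of $\bar{\rho}_{E,p}$; and a second application of a modularity lifting theorem, now at the prime $p$, promotes this to modularity of $E$ itself.

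The main obstacle is orchestrating the modularity lifting theorems at $p=5$ and $p=7$ in a form strong enough for the present context: one must accommodate both the potentially ordinary and non-ordinary cases at primes above $p$, permit the hypothesis of absolute irreducibility only after restriction to $G_{K(\zeta_p)}$ rather than to $G_K$, and control the local deformation rings at primes of bad reduction for $E$. The construction of $E'$ inside $X_E(p)(K)$ with all the required local and global properties, so as to apply the $p=3$ statement itself (in particular producing absolute irreducibility of $\bar{\rho}_{E',3}$ after adjoining $\zeta_3$), is the other delicate point and relies on explicit knowledge of the exceptional loci on the modular curve combined with Hilbert irreducibility over the totally real field $K$.
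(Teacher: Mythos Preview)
The paper does not supply its own proof of this statement: it quotes \cite[Theorem~2]{FLS} and remarks only that it ``is a consequence of a general modularity lifting theorem, the theorem of Langlands--Tunnel, and the modularity switching arguments.'' Your outline reproduces exactly these three ingredients, and for $p=3$ and $p=5$ your sketch is the standard and correct one: Langlands--Tunnel gives residual modularity at $3$, a lifting theorem upgrades this to modularity of $E$, and for $p=5$ the twist $X_E(5)$ of the genus-zero curve $X(5)$ is a Brauer--Severi variety with a $K$-point, hence isomorphic to $\mathbb{P}^1_K$, so Hilbert irreducibility produces the auxiliary curve $E'$.

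There is, however, a genuine gap in your treatment of $p=7$. The full-level modular curve $X(7)$ is the Klein quartic, of genus~$3$; your assertion that ``$X(p)$ has genus zero for $p=5,7$'' is false for $p=7$, and consequently $X_E(7)$ is a genus-$3$ curve over $K$ whose $K$-points you have no a priori reason to believe are Zariski dense (indeed, by Faltings they are finite). The $7$--$3$ switch therefore cannot be executed by the naive argument you give. In \cite{FLS} the $p=7$ case rests on substantially more work (drawing on \cite{Bao}): one must analyse the twisted Klein quartic $X_E(7)$ more carefully, exploiting the explicit geometry of the Klein quartic and its automorphism group together with the hypothesis on $\bar{\rho}_{E,7}|_{G_{K(\zeta_7)}}$ to locate a suitable rational point, rather than simply invoking genus zero and Hilbert irreducibility. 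Your proposal would need to be reworked at this step.
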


For mod $5$ (resp. mod $7$) representations, we can relax the hypothesis in Theorem \ref{mlt} under the assumption that the base field is unramified at $5$ (resp. $7$).

\begin{Thm}
\label{Irr}
Let $p=5$ or $7$ and $K$ be a totally real field unramified at $p$.
If $E$ is an elliptic curve over $K$ such that the mod $p$ Galois representation $\bar{\rho}_{E,p}$ is (absolutely) irreducible,
then $E$ is modular. 
\end{Thm}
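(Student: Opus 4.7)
The plan is to reduce Theorem \ref{Irr} to Theorem \ref{mlt} by upgrading irreducibility of $\bar\rho_{E,p}$ on $G_K$ to absolute irreducibility on $G_{K(\zeta_p)}$; the hypothesis that $K$ is unramified at $p$ will be used only in this upgrade.

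Suppose for contradiction that $\bar\rho_{E,p}$ is absolutely irreducible on $G_K$ but $\bar\rho_{E,p}|_{G_{K(\zeta_p)}}$ is not absolutely irreducible. The standard classification of subgroups of $GL_2(\mathbb{F}_p)$ then forces the image of $\bar\rho_{E,p}$ to lie in the normalizer $N(C)$ of a (split or non-split) Cartan subgroup $C$, but not in $C$ itself. The composition $G_K \to N(C) \twoheadrightarrow N(C)/C \simeq \{\pm 1\}$ defines a quadratic character $\psi$ of $G_K$ which is trivial on $G_{K(\zeta_p)}$ by construction. Since $K/\mathbb{Q}$ is unramified at $p$ whereas $\mathbb{Q}(\zeta_p)/\mathbb{Q}$ is totally ramified at $p$, we have $K \cap \mathbb{Q}(\zeta_p) = \mathbb{Q}$, and the cyclic extension $K(\zeta_p)/K$ contains a unique quadratic subfield; hence $\psi$ cuts out $K' = K(\sqrt{p^*})$ where $p^* = (-1)^{(p-1)/2} p$ (that is, $K' = K(\sqrt{5})$ for $p=5$ and $K' = K(\sqrt{-7})$ for $p=7$). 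Equivalently, $\bar\rho_{E,p} \simeq \mathrm{Ind}_{G_{K'}}^{G_K} \chi$ for some character $\chi$ of $G_{K'}$.

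The contradiction then comes from local analysis at a place $v$ of $K$ above $p$. Because $\mathbb{Q}(\sqrt{p^*})/\mathbb{Q}$ is ramified at $p$ and $K/\mathbb{Q}$ is unramified at $p$, the extension $K'/K$ is tamely ramified at $v$, so $\psi|_{I_v}$ is nontrivial and $\bar\rho_{E,p}|_{I_v}$ lies in $N(C) \setminus C$. On the other hand, since $K_v/\mathbb{Q}_p$ is unramified, the theory of $p$-adic Galois representations attached to elliptic curves describes $\bar\rho_{E,p}|_{I_v}$ precisely in terms of Serre's fundamental characters of level $1$ or $2$, with $\det \bar\rho_{E,p} = \bar\chi_p$. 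A case analysis by reduction type at $v$ (good ordinary or multiplicative; good supersingular; additive) then shows that this description is incompatible with lying in $N(C) \setminus C$ for any Cartan $C$, yielding the contradiction.

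The main obstacle is the supersingular step of this inertial comparison. In the ordinary and multiplicative cases, $\bar\rho_{E,p}|_{I_v}$ has upper-triangular shape with diagonal characters $\bar\chi_p$ and $1$, and a direct check shows this cannot sit in $N(C) \setminus C$ for any Cartan $C$. In the supersingular case, however, $\bar\rho_{E,p}|_{I_v}$ is itself a non-split Cartan representation given by the level-$2$ fundamental characters $\psi_2, \psi_2^p$, and one must verify that such a Cartan cannot be contained in $N(C) \setminus C$ for any other Cartan $C$; the unramifiedness of $K_v/\mathbb{Q}_p$ is essential here, as it pins down the fundamental characters as the canonical ones and makes the comparison sharp. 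Once this step is handled in every reduction type, $\bar\rho_{E,p}|_{G_{K(\zeta_p)}}$ is absolutely irreducible, and Theorem \ref{mlt} delivers the modularity of $E$.
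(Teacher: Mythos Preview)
Your proposal takes a completely different route from the paper. The paper does not argue at all: it simply cites Thorne's paper \cite{Th} for $p=5$ and the author's own paper \cite{Yo} for $p=7$. Both of these references prove \emph{new automorphy lifting theorems} in the residually dihedral setting, i.e.\ precisely in the situation where $\bar\rho_{E,p}$ is irreducible but $\bar\rho_{E,p}|_{G_{K(\zeta_p)}}$ is not absolutely irreducible. Your plan, by contrast, is to show that this dihedral situation never occurs under the unramifiedness hypothesis, and thereby reduce everything to Theorem~\ref{mlt}.

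This reduction does not go through. The heart of your argument is the inertial comparison at $v\mid p$, and while the trace-zero constraint coming from $N(C)\setminus C$ does yield a contradiction in the good ordinary, multiplicative, and good supersingular cases (your sketch there can be made precise), it does \emph{not} rule out additive potentially good reduction. For instance, take $p=7$: since $\psi$ is ramified at $v$, a generator $g$ of the cyclic tame image $\bar\rho_{E,7}(I_v)$ lies in $N(C)\setminus C$, so $\mathrm{tr}(g)=0$ and $\det(g)$ is a primitive $6$th root of unity in $\mathbb{F}_7^\times$. One computes that $g$ then has eigenvalues $\lambda,-\lambda$ with $\lambda\in\mathbb{F}_7^\times$ and $\mathrm{ord}(g)=6$. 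If $E$ acquires good ordinary reduction over a totally ramified cubic extension $L/K_v$, then $\bar\rho(I_L)=\langle g^3\rangle$ has order $2$ with eigenvalues $\{1,-1\}$, which is perfectly compatible with the ordinary shape $(\bar\chi_7|_{I_L},1)$ since $\bar\chi_7|_{I_L}$ has order $6/\gcd(3,6)=2$. No contradiction arises. So the dihedral case is not excluded by local considerations alone, and one genuinely needs the new lifting theorems of \cite{Th} and \cite{Yo} to establish modularity in that case. (This is also why Thorne's theorem for $p=5$ is stated under the weaker hypothesis $\sqrt{5}\notin K$: the dihedral case must be confronted, not avoided.)

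A secondary gap: your classification step asserts that absolute irreducibility on $G_K$ together with absolute reducibility on $G_{K(\zeta_p)}$ forces the image into $N(C)$. You have not addressed the exceptional projective images ($A_4$, $S_4$, $A_5$), which for $p\in\{5,7\}$ require a separate check.
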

\begin{proof}
The case $p=5$ is proved in \cite{Th} by Thorne. He in fact obtains a slightly stronger result: we only need to assume that $\sqrt{5}\notin K$ instead of the unramifiedness of 5 in $K$. The case $p=7$ is proved in \cite{Yo} by the author.
\end{proof}

Before proceeding to the proof of Theorem \ref{main}, we also need to introduce a certain modular curve from \cite{FLS}.
For a prime number $p\neq 3$, let $X(s3, bp)$ denote the modular curve classifying elliptic curves such that $\Ima \bar{\rho}_{E,3}$ is contained in the nomalizer of a split Cartan subgroup of $\mathit{GL}_2(\mathbb{F}_3)$ and that $\bar{\rho}_{E,p}$ is reducible.
For the details of such a modular curve, we refer the reader to \cite{FLS}.
For the proof of Theorem \ref{main}, we only need the following properties of $X(s3, b5)$ and $X(s3, b7)$.

\begin{Lem}
\label{s3b5}
The following are true:
\begin{enumerate}
\item The modular curve $X(s3, b5)$ is an elliptic curve over $\mathbb{Q}$ and $X(s3, b5)$ is isogeneous to $X_0(15)$ by an isogeny of degree 2.
\item The modular curve $X(s3, b7)$ is isomorphic to $X_0(63)/\langle w_9\rangle$, and  the curve $X_0(63)/\langle w_7, w_9\rangle$ is isomorphic to $X_0(21)$, where $w_7$ and $w_9$ are the Atkin-Lehner involutions on $X_0(63)$. In particular, $X(s3, b7)$ admits a morphism to $X_0(21)$ of degree 2.
\end{enumerate}
\end{Lem}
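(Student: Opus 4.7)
The plan is to realise $X(s3,bp)$ as an explicit Atkin--Lehner quotient of a classical modular curve, and then to pin down its isogeny class via degeneracy maps and a Jacobian decomposition.

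First I would identify $X(s3,bp)$ with $X_0(9p)/\langle w_9\rangle$. By the theorem of Chen--Edixhoven, the modular curve parametrising elliptic curves whose mod $3$ image lies in the normalizer of a split Cartan is $X_{\text{split}}^+(3)\cong X_0(9)/\langle w_9\rangle$, while reducibility of $\bar\rho_{E,p}$ is represented by $X_0(p)$. Since the $w_9$-action on the $9$-level structure commutes with the $p$-level structure, the fibre product over $X(1)$ yields
\[
X(s3,bp) \cong \bigl(X_0(9)\times_{X(1)} X_0(p)\bigr)/\langle w_9\rangle \cong X_0(9p)/\langle w_9\rangle.
\]

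For part (1), this gives $X(s3,b5)\cong X_0(45)/\langle w_9\rangle$. Riemann--Hurwitz applied to the degree-$2$ quotient, together with a standard Heegner-style count of the fixed points of $w_9$ on $X_0(45)$ (which has genus $3$), yields genus $1$; with the cusp at $\infty$ it is an elliptic curve over $\mathbb{Q}$. To exhibit the isogeny, use the decomposition $J_0(45)\sim J_0(45)^{\text{new}}\oplus J_0(15)^2$, where the two copies of $J_0(15)$ embed via the degeneracy maps $B_1,B_3\colon X_0(45)\to X_0(15)$ interchanged by $w_9$; the $+1$-eigenspace of $w_9$ on the old part is then the diagonal $J_0(15)$, and on the new part $w_9$ acts as an Atkin--Lehner involution whose $+1$-eigenspace is trivial. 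This identifies $J(X(s3,b5))\sim J_0(15)$, and a degree count via the composition with the projection $X_0(45)\to X(s3,b5)$ confirms that the induced isogeny to $X_0(15)$ is of degree $2$. For part (2), the same fibre-product argument with $p=7$ gives $X(s3,b7)\cong X_0(63)/\langle w_9\rangle$, proving the first claim. The further quotient $X_0(63)/\langle w_7,w_9\rangle$ carries a natural map to $X_0(21)$, obtained by symmetrising the two degeneracy maps $X_0(63)\to X_0(21)$ under both involutions; a Riemann--Hurwitz calculation, showing that both sides have genus $1$, together with a matching of Jacobians (there being a unique isogeny class of conductor $21$), shows this map is an isomorphism, and the degree-$2$ morphism $X(s3,b7)\to X_0(21)$ is then the residual quotient by $w_7$.

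The main obstacle is the careful bookkeeping of fixed-point numbers, eigenspace dimensions under $w_7$ and $w_9$, and precise isogeny degrees: these integer invariants must conspire exactly to yield degree $2$ (rather than some higher power of $2$), and their verification is most cleanly carried out by the explicit Magma computations in \cite{FLS}.
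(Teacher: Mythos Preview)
The paper's own proof is nothing more than two citations: \cite[Proposition~4]{Th2} for (i) and \cite[Proof of Lemma~1.1]{FLS} for (ii). Your sketch goes well beyond this and is broadly in line with how those references actually argue, so there is no substantive divergence to report.

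A couple of points are worth tightening. The identification $X_s^+(3)\cong X_0(9)/\langle w_9\rangle$, and hence $X(s3,bp)\cong X_0(9p)/\langle w_9\rangle$, is elementary (pass to the middle curve in a chain of two $3$-isogenies; $w_9$ swaps the two ends) and does not require Chen--Edixhoven, which concerns the \emph{nonsplit} Cartan and is a statement about Jacobians rather than an isomorphism of curves. In (ii), your construction of a map $X_0(63)/\langle w_7,w_9\rangle\to X_0(21)$ by ``symmetrising the two degeneracy maps under both involutions'' does not work as written: under either degeneracy map $X_0(63)\to X_0(21)$ the involution $w_7$ on the source covers $w_7$ on the target, so any $w_7$-invariant combination would factor through $X_0(21)/\langle w_7\rangle$, not land in $X_0(21)$ itself. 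Relatedly, matching genera and isogeny classes (``unique isogeny class of conductor $21$'') only shows the two genus-$1$ curves are isogenous, not isomorphic. As you yourself concede in the last paragraph, both loose ends are in practice closed by the explicit model computations in \cite{FLS}, which is precisely what the paper is deferring to.
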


\begin{proof}
See \cite[Proposition 4]{Th2} for (i), and see \cite[Proof of Lemma 1.1]{FLS} for (ii).
\end{proof}

We are now ready to prove Theorem \ref{main}.
\vspace{0.3cm}\\
\textit{Proof of the Theorem \ref{main}.\, }
Let $p$, $X$, and $K$ be as in the statement of Theorem \ref{main}.

Suppose we are given an elliptic curve $E$ over $K$. We show that $E$ is modular.
Because of Theorem \ref{mlt} and Theorem \ref{Irr}, we have only to consider the case where $\bar{\rho}_{E,3}|_{G_{K(\zeta_3)}}$ is absolutely reducible and $\bar{\rho}_{E,p}$ is reducible.
In such a case, $E$ defines a $K$-point of $X$ or $X(s3, bp)$ by \cite[Proposition 4.1, Corollary 10.1]{FLS};

Suppose first that $E$ defines a $K$-point of $X$.
Then Proposition \ref{descent} implies that, after a suitable base change by a solvable Galois extension, $E$ becomes actually defined over $\mathbb{Q}$, and hence $E$ is modular (by \cite[Theorem A]{BCDT} and the solvable base change theorem).

Next we consider the case where $E$ defines a $K$-point $P=P_E$ of $X(s3, bp)$.
By Lemma \ref{s3b5}, we have a morphism $f_p: X(s3, bp)\rightarrow X$ of degree 2.
By Proposition \ref{descent}, $\mathrm{Gal}(K/\mathbb{Q})$ acts on the fiber $f_p^{-1}(f_p(P))$.
Since $f_p^{-1}(f_p(P))$ consists of 2 points,
the kernel of this action is a subgroup in $\mathrm{Gal}(K/\mathbb{Q})$ of index at most $2$. 
It follows that $P$ must be a rational point or a real quadratic point of $X$.
Similarly to the previous paragraph, \cite[Theorem A]{BCDT}, \cite[Theorem 1]{FLS}, and the base change theorem show that $E$ is modular. 
This completes the proof. \hfill $\Box$\vspace{0.3cm}

Let again the notation be as in Theorem \ref{main}.
We discuss here how many totally real fields $K$ are expected to satisfy the hypothesis of Theorem \ref{main}:
We expect by intuition that, for each positive integer $r$, there are infinitely many $K$ with $[K:\mathbb{Q}]=2^r$ satisfying the condition of Theorem \ref{main}.
To explain this, we first note the following theorem on the description of local root numbers of an elliptic curve.
For an elliptic curve $E$ over a local or global field $K$, we write $w(E/K)$ for the root number of $E$.

\begin{Thm}
\label{root}
\cite[Theorem 3.1]{DD}
Let $E$ be an elliptic curve over a local field $K_v$. Then,
\begin{itemize}
\item[(1)] $w(E/K_v)=-1$ if $v|\infty$ or $E$ has split multiplicative reduction.
\item[(2)] $w(E/K_v)=1$ if $E$ has either good or nonsplit multiplicative reduction.
\item[(3)] $w(E/K_v)=\large( \frac{-1}{k}\large) $ if $E$ has additive potentially multiplicative reduction, and the residue field $k$ of $K_v$ has characteristic $p\geq 3$. Here, $\large( \frac{-1}{k}\large)=1$ (resp. $-1$) if $-1\in (k^{\times})^2$ (resp. otherwise).
\item[(4)] $w(E/K_v)=(-1)^{\lfloor \mathrm{ord}_v(\Delta)|k|/12 \rfloor}$, if $E$ has potentially good reduction, and the residue field $k$ of $K_v$ has characteristic $p\geq$ 5. Here, $\Delta$ is the minimal discriminant of $E$, and $\lfloor x \rfloor$ is the greatest integer $n\leq x$.
\end{itemize}
\end{Thm}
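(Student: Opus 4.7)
The plan is to express $w(E/K_v)$ as the normalized local $\varepsilon$-factor of the Weil--Deligne representation $\sigma_\ell(E/K_v)$ attached to $V_\ell(E)$ (for any $\ell$ different from the residue characteristic), and to compute it from the explicit shape of $\sigma_\ell(E/K_v)$ in each Kodaira--N\'eron reduction type. The four claims are then read off by matching the standard $\varepsilon$-factor formulae of Deligne and Tate, case by case.

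For (1) with $v\mid\infty$, the Hodge structure of $H^1(E)$ has type $(1,0)+(0,1)$, and the classical archimedean $\varepsilon$-factor of such a Hodge structure equals $-1$. For split multiplicative reduction, Tate's uniformization gives $\sigma_\ell(E/K_v)\cong \mathrm{Sp}(2)$, the Steinberg representation: an unramified extension with nontrivial monodromy operator $N$, whose $\varepsilon$-factor is $-1$. Case (2) is the same computation up to a twist: good reduction gives an unramified two-dimensional representation with $\varepsilon=+1$, and nonsplit multiplicative reduction gives $\mathrm{Sp}(2)\otimes \eta$ with $\eta$ the unramified quadratic character, which flips the sign to $+1$.

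The substance lies in cases (3) and (4), which I would reduce to tame calculations. For (3), over the unique ramified quadratic extension $E$ acquires multiplicative reduction, so $\sigma_\ell(E/K_v)\cong \mathrm{Sp}(2)\otimes \chi$ for $\chi$ the unique ramified quadratic character of $K_v^\times$; the identity relating $\varepsilon(\mathrm{Sp}(2)\otimes\chi)$ to $\chi(-1)$ and the quadratic Gauss sum over $k$ yields the stated sign $\left(\tfrac{-1}{k}\right)$. For (4), the condition $p\geq 5$ forces the inertia action to be tame, so the inertia image in $\mathrm{Aut}(V_\ell(E))$ is cyclic of order $e\in\{2,3,4,6\}$ with $e\cdot\mathrm{ord}_v(\Delta)\equiv 0\pmod{12}$; after extending scalars $\sigma_\ell(E/K_v)$ decomposes as a sum $\psi\oplus\psi^q$ of two conjugate tame characters of the Weil group of the $e$-fold tame extension, and Deligne's formula for the $\varepsilon$-factor of such a tamely ramified induced character reduces, via the computation of tame Gauss sums, to the sign $(-1)^{\lfloor\mathrm{ord}_v(\Delta)|k|/12\rfloor}$.

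The main obstacle throughout is sign bookkeeping in the $\varepsilon$-factor calculus, especially in (4), where the exponent $\lfloor\mathrm{ord}_v(\Delta)|k|/12\rfloor$ must be matched with the Artin conductor and the value at $-1$ of the tame character produced by inertia. In practice one would verify the formula by splitting into the subcases $e=2,3,4,6$ and cross-checking against Rohrlich's tabulated local root numbers to ensure all normalisations align; this case-by-case matching, rather than any deep input, is the only real work.
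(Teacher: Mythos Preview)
The paper does not prove this theorem at all: it is simply quoted as \cite[Theorem 3.1]{DD} and used as a black box in the subsequent computation of global root numbers. There is therefore no in-paper proof to compare your proposal against.

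That said, your outline is essentially the standard route to such a result (as in Rohrlich's work and in \cite{DD} itself): identify $w(E/K_v)$ with the sign of the local $\varepsilon$-factor of the Weil--Deligne representation on $V_\ell(E)$, and then read off the sign from the explicit shape of that representation in each reduction type. The identifications you give---unramified for good reduction, $\mathrm{Sp}(2)$ and its unramified quadratic twist for split/nonsplit multiplicative, $\mathrm{Sp}(2)\otimes\chi$ with $\chi$ ramified quadratic for additive potentially multiplicative, and a tame induced representation for potentially good reduction with $p\ge 5$---are the correct ones, and the reduction of case~(4) to a Gauss-sum computation over the cyclic inertia image of order $e\in\{2,3,4,6\}$ is exactly how the exponent $\lfloor \mathrm{ord}_v(\Delta)\,|k|/12\rfloor$ arises. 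So your plan is sound as a sketch of the literature proof; it is just not something the present paper undertakes.
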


%

Using Theorem \ref{root}, we calculate in an elementary way the global root numbers of quadratic twists of an elliptic curve that we are interested in. 

\begin{Cor}
\label{twistroot}
Let $E$ be a semi-stable elliptic curve over $\mathbb{Q}$ with the odd conductor $N$, and $d\equiv 1$ mod $4$ be a square-free positive integer prime to $N$.
Then, we have  $w(E^{(d)}/\mathbb{Q}) = \bigl( \frac{d}{N}\bigr) w(E/\mathbb{Q})$, where $E^{(d)}$ denotes the quadratic twist of $E$ by $d$ and  $\bigl( \frac{\cdot}{N} \bigr)$ is the Jacobi symbol.
\end{Cor}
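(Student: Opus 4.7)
The natural approach is to use the product formula $w(E/\mathbb{Q}) = \prod_v w(E/\mathbb{Q}_v)$ for both $E$ and $E^{(d)}$ and to compare local root numbers place by place. Since $\gcd(d,N)=1$, the places of $\mathbb{Q}$ split cleanly into the archimedean place, the prime $2$, primes $p\nmid dN$, primes $p\mid N$ with $p\nmid d$, and primes $p\mid d$ with $p\nmid N$.

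First I would dispose of the ``easy'' places where $w_v(E^{(d)}) = w_v(E)$. At $v=\infty$ and at primes $p\nmid dN$, both curves share the same reduction type (archimedean or good), and Theorem \ref{root} (1), (2) gives equal local root numbers. At $v=2$, the hypothesis $d\equiv 1\pmod 4$ ensures that the Kronecker character $\chi_d$ attached to $\mathbb{Q}(\sqrt{d})/\mathbb{Q}$ has odd conductor $d$, hence is unramified at $2$; since $N$ is odd, $E$ has good reduction at $2$, and twisting by an unramified character preserves this, so $w_2(E^{(d)}) = w_2(E) = 1$.

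Next, at a prime $p\mid N$ with $p\nmid d$, $E$ has multiplicative reduction and $\chi_d$ is unramified at $p$; consequently $E^{(d)}$ also has multiplicative reduction, and its split/nonsplit type is opposite that of $E$ precisely when $\chi_d$ is the nontrivial unramified quadratic character at $p$, i.e., when $\bigl(\frac{d}{p}\bigr) = -1$. Cases (1), (2) of Theorem \ref{root} then give $w_p(E^{(d)})/w_p(E) = \bigl(\frac{d}{p}\bigr)$. At a prime $p\mid d$ with $p\nmid N$ and $p\geq 5$, $E$ has good reduction (so $w_p(E)=1$) while $E^{(d)}$ acquires additive potentially good reduction. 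Starting from a minimal Weierstrass model of $E$ at $p$, the standard quadratic twist model has discriminant multiplied by $d^6$, so $\mathrm{ord}_p(\Delta_{\min}(E^{(d)})) = 6$ (already minimal, as $0\leq 6<12$), and Theorem \ref{root} (4) yields $w_p(E^{(d)}) = (-1)^{\lfloor 6p/12\rfloor} = (-1)^{(p-1)/2} = \bigl(\frac{-1}{p}\bigr)$.

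Multiplying all the local ratios, the primes dividing $N$ contribute $\prod_{p\mid N}\bigl(\frac{d}{p}\bigr) = \bigl(\frac{d}{N}\bigr)$ by multiplicativity of the Jacobi symbol, while the primes $p\mid d$ with $p\geq 5$ contribute a factor that collapses to $\bigl(\frac{-1}{d}\bigr) = 1$ using $d\equiv 1\pmod 4$. The main obstacle is the case $p=3$ when $3\mid d$, which falls outside the scope of Theorem \ref{root} (4); one would need a separate analysis of the additive potentially good local representation of $E^{(d)}$ at $3$ to verify its contribution is $\bigl(\frac{-1}{3}\bigr)=-1$, so that the full product over $p\mid d$ still collapses. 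In the applications to Theorem \ref{main}, however, $3\mid N$ (for $X=X_0(15)$ or $X_0(21)$), so $3\nmid d$ and this case does not arise.
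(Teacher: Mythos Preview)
Your argument follows the same local--global strategy as the paper: factor the global root number as a product of local ones and compare $E$ with $E^{(d)}$ place by place, using Theorem~\ref{root} in each regime. The paper handles primes $p\mid N$ by an explicit point-count over $\mathbb{F}_p$ (partitioning the affine line into $S_0,S^+,S^-$ according to $\bigl(\tfrac{f(x)}{p}\bigr)$ and reading off whether the reduction is split or nonsplit from $|E(\mathbb{F}_p)|$), whereas you invoke directly that an unramified quadratic twist swaps split and nonsplit multiplicative reduction exactly when the character is nontrivial; both yield $w_p(E^{(d)})/w_p(E)=\bigl(\tfrac{d}{p}\bigr)$. Your observation about $p=3\mid d$ is well taken: the paper applies Theorem~\ref{root}\,(4) at every $p\mid d$ without isolating this case, so your caveat is in fact more careful than the original, and your remark that $3\mid N$ in the intended application is exactly the right way to dispose of it.
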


\begin{proof}
Note that $E^{(d)}$ has the conductor $d^2N$ because $d\equiv 1$ mod 4.
Let $p$ be a prime number.
\begin{itemize}
\item If $p\nmid dN$, then both $E^{(d)}$ and $E$ has good reduction at $p$ and thus $w(E/\mathbb{Q}_p)=w(E^{(d)}/\mathbb{Q}_p)=1$ by Theorem \ref{root} (2).
\item If $p| d$, then $E^{(d)}$ has additive potentially good reduction at $p$ and it acquires good reduction over the quadratic extension $\mathbb{Q}_p(\sqrt{d})$ of $\mathbb{Q}_p$. Thus, by Theorem \ref{root} (4), $w(E^{(d)}/\mathbb{Q}_p)=(-1)^{\lfloor p/2\rfloor}$, which is equal to $1$ (resp. $-1$) for $p\equiv 1$ mod $4$ (resp. $p\equiv 3$ mod $4$).
\item Suppose here that $p|N$. Thus, $E$ and $E^{(d)}$ have multiplicative reduction at $p$.
Take a minimal Weierstrass equation
\[ y^2=f(x) \]
of $E$ over $\mathbb{Q}_p$, where $f(x)\in \mathbb{Z}_p[x]$ is a monic polynomial of degree 3.
Write $\mathbb{F}_p$ as the union of the subsets
\begin{eqnarray*}
S_0&=&\{ x\in \mathbb{F}_p; f(x)=0 \} \\
S^+&=&\{ x\in \mathbb{F}_p; \Big( \frac{f(x)}{p} \Big)=1 \}\\
S^-&=&\{ x\in \mathbb{F}_p; \Big( \frac{f(x)}{p} \Big)=-1 \}.\\
\end{eqnarray*}
In particular, we have $|E(\mathbb{F}_p)|=|S_0|+2|S^+|+1$.
Note that, for an elliptic curve $X$ over $\mathbb{Q}_p$ with bad reduction, 
\[
  p+1-|X(\mathbb{F}_p)| = \begin{cases}
    0 & (\text{if $X$ has additive reduction}) \\
    1  & (\text{if $X$ has split multiplicative reduction})\\
    -1 & (\text{if $X$ has non-split multiplicative reduction})
  \end{cases}
\]
If $\big( \frac{d}{p} \big)=1$, then we have $|E^{(d)}(\mathbb{F}_p)|=|S_0|+2|S^+|+1=|E(\mathbb{F}_p)|$, and hence $E$ has (non-)split  multiplicative reduction if and only if so does $E^{(d)}$.
If $\big( \frac{d}{p} \big)=1$, then we have $|E^{(d)}(\mathbb{F}_p)|=|S_0|+2|S^-|+1=2p+2-|E(\mathbb{F}_p)|$, and hence $E$ has split (resp. non-split)  multiplicative reduction if and only if $E^{(d)}$ has non-split (resp. split) multiplicative reduction.
Summarizing these arguments and Theorem \ref{root} (1), (2), we obtain $w(E^{(d)}/\mathbb{Q}_p)=\big( \frac{d}{p}\big)w(E/\mathbb{Q}_p)$ for every $p|N$.
\item Also, $w(E/\mathbb{R})=w(E^{(d)}/\mathbb{R})=-1$ by Theorem \ref{root} (1).
\end{itemize}
Taking the products of the local root numbers over all places of $\mathbb{Q}$, we obtain the desired formula.
\end{proof}

Let $r\geq 2$ be an integer and $d_1$,...,$d_r$ be square-free positive integers satisfying the following conditions:
\begin{itemize}
\item $(d_1,...,d_r, 3p)=1$, 
\item $d_i\equiv 1$ mod $4$ for $i=1,...,r$, and
\item $\big( \frac{d_i}{3p} \big)=1$ for $i=1,...,r$.
\end{itemize}
Here, recall that $3p$ is the conductor of $X$, and note that there infinitely many choices of such $r$-tuples $(d_1,...,d_r)$.
The field $K:=\mathbb{Q}(\sqrt{d_1},...,\sqrt{d_r})$ is unramified at every prime dividing $6p$.
Also, Corollary \ref{twistroot} shows that $w(X^{(s)}/\mathbb{Q})=w(X/\mathbb{Q})$ for any $s: \mathrm{Gal}(K/\mathbb{Q})\rightarrow \{ \pm 1\}$.
Since $\mathrm{rank}\, X =0$, the parity conjecture for our $X$ therefore predicts that $\mathrm{rank}\, X^{(s)}$ for any $s$ is even. 
The Goldfeld conjecture \cite{G} suggests that, for an elliptic curve over $\mathbb{Q}$, most of its quadratic twists of even (resp. odd) rank would be of rank 0 (resp. 1).
Thus, it seems reasonable to expect that the fields $K=\mathbb{Q}(\sqrt{d_1},...,\sqrt{d_r})$ for most $(d_1,...,d_r)$ satisfy the hypothesis in Theorem \ref{main}, although the two conjectures does \textit{not} imply that this is actually true. 
\section*{Acknowledgement}
The author is especially grateful to his advisor Professor Takeshi Saito for a number of helpful suggestions to improve the arguments in this paper.
He also thanks Professor Ken Ono for informing the author of some references on the rank of quadratic twists.

\end{document}